\newcommand{\E}{\mathbb{E}}
\newcommand{\R}{\mathbb{R}}
\title{Computing expected moments of the R\'enyi 
 parking problem on the circle}
\author{M.~Hegland}
\address{Mathematical Sciences Institute, The Australian National University, 
Australian Capital Territory~2601, \textsc{Australia}.}
\author{C.J.~Burden}
\address{Mathematical Sciences Institute, The Australian National University, 
Australian Capital Territory~2601, \textsc{Australia}.}
\author{Z.~Stachurski}
\address{School of Engineering, The Australian National University, 
Australian Capital Territory~2601, \textsc{Australia}.}
\begin{document}

\maketitle

\begin{abstract}
    A highly accurate and efficient method  
    to compute the expected values of the count, sum, and squared norm of the sum of
    the centre vectors of a random maximal sized collection of non-overlapping unit diameter disks touching a fixed unit-diameter disk is presented. This extends earlier work on 
    R\'enyi's parking problem [Magyar Tud. Akad. Mat. Kutat\'{o} Int. K\"{o}zl. 3 (1-2), 1958, pp. 109-127]. Underlying the method is a splitting of the the problem conditional 
    on the value of the first disk. This splitting is proven and then used to derive integral equations for the expectations. These equations take a lower block triangular form. 
    They are solved using substitution and approximation of the integrals to very high accuracy using a polynomial approximation within the blocks.  \end{abstract}

\tableofcontents

%
%
\section{Introduction}

Consider the two-dimensional process of unit-diameter disks attaching randomly onto the circumference of a fixed central unit-diameter disk, as shown in 
Figure~\ref{disksAndCars}(a).  The accretion is assumed to occur sequentially in such a way that the location of each additional disk is randomly and uniformly 
distributed over the available accessible part of the central disk.  The process stops when no further attachment points are available.  Herein we analyse the 
distributional properties of the vector sum of the location of the centres of the attached disks.  
    
The problem is related to a well-known one-dimensional packing problem known as R\'{e}nyi's car parking problem~\citep{ClaSN16}.  
\citet{Ren58}  (see \citep[pp 203--218]{SelectedTrans} for an English translation) 
analysed a stochastic process arising when sequentially packing unit intervals $[Y_i,Y_i+1)$ representing parked cars into an interval $[a,b]$ 
(where $a,b\in\R$ and $Y_i\in [a,b-1]$ is a random number), such that the unit intervals do not overlap. The location of each additional car is randomly and uniformly distributed over the available subset of $[a, b]$,
and the packing is completed when all the remaining gaps are less than one.  
R\'enyi showed~\cite{Ren58} that the expected value of the number of parked cars satisfies 
an integral equation resulting from a splitting property of the number of cars in sub-intervals on either side of the first parked car. He further established an exact 
value (approximately equal to $0.75$) for the limit of the expected packing density as the parking interval $b - a \to \infty$.  
After the first external disk has attached, 
our approach involves of mapping the available part of the circumference of the central disk 
onto the interval $[1, 6]$, as shown in Figure~\ref{disksAndCars} (b) for the interval case which is appropriately mapped to the circular case in Figure~\ref{disksAndCars} (a), and developing analogues of R\'enyi's integral equation for features relevant to the disk accretion 
problem.  

The expected maximal number of parked cars is computed explicitly for small $x = b - a$.  However the formulas get very complicated even for 
moderate $x$, take a long time to compute and often evaluate poorly due to rounding errors. Thus numerical approaches were established for computing the 
expectation~\cite{LalG74, Maretal89}.  These numerical approaches give highly accurate results. The approach taken here is similar in that it also uses a 
highly accurate piecewise polynomial approximation. In addition to computing the expected number of added disks, we compute the mean squared 
shift of the vector sum of centres of the attached disks. This requires the solution to high numerical accuracy of a 
system of integral equations.  We review the underlying splitting property in Section~\ref{sec:2} for various features relevant to the disk accretion problem and derive 
a system of integral equations for their expected values in Section~\ref{sec:3}. The method is then implemented using the new framework developed by \citet{OlvT14}. 
The algorithm is summarised in Section~\ref{sec:4} and final results are presented in Section~\ref{sec:5}.

\begin{figure}
        \centering
         \includegraphics[width=0.8\textwidth]{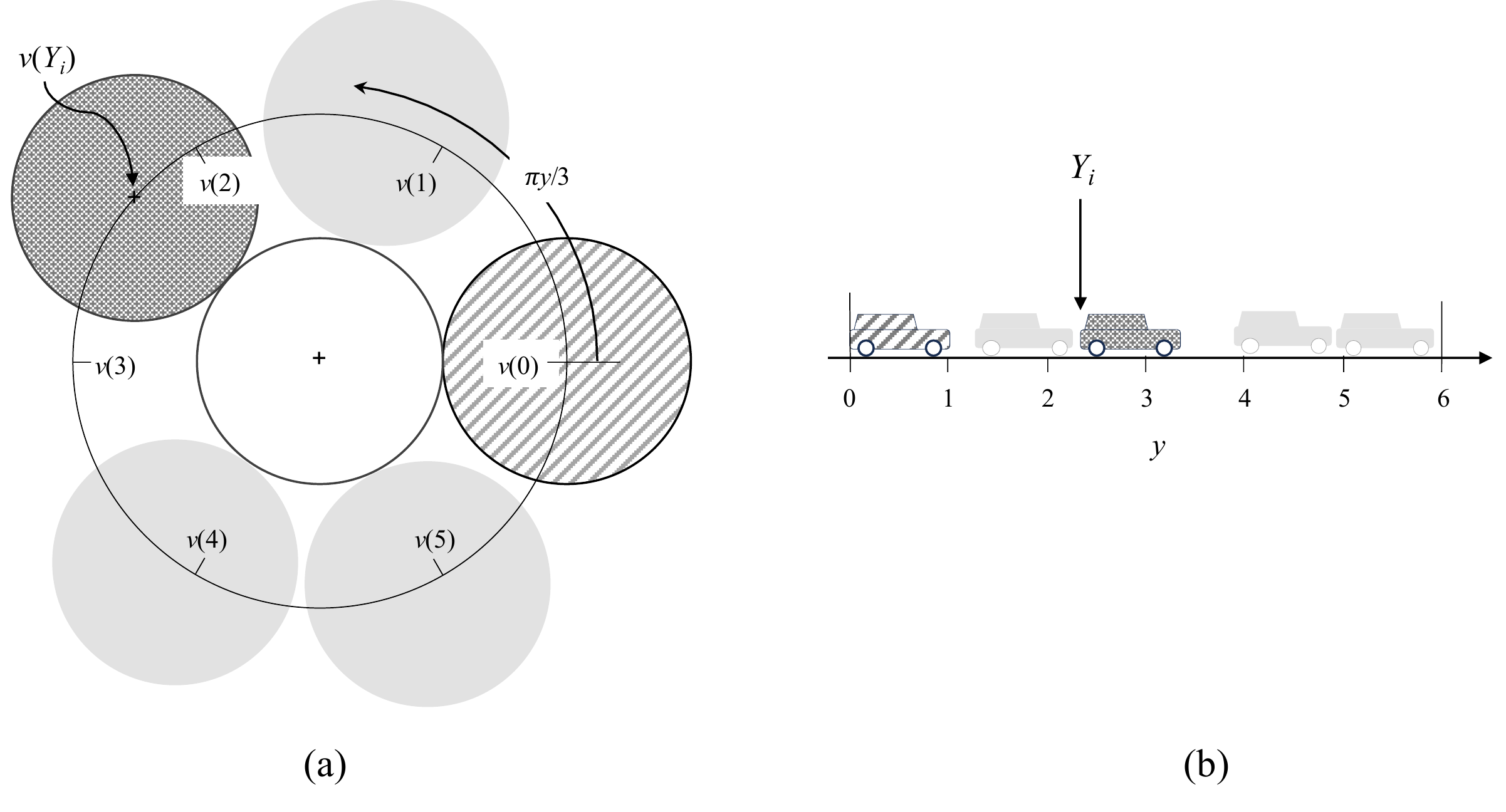}
        \caption{(a) The accretion of disks to a central disk, and (b) the equivalent R\'{e}nyi parking problem, in which the centre of each disk is mapped to the left hand 
        end of a unit-length car.  The vector-valued function $v$ is defined by Equation~(\ref{eqn:v}).  Without loss of generality, the first attached disk is assumed to be 
        centred at $v(0) = (1, 0)^{\rm T}$.  Equivalently, the first car is assumed to be 
        located at position 0, and hence the R\'{e}nyi process pertains to the interval $[1, 6]$.
}
        \label{disksAndCars}
\end{figure}

%
%
\section{Mathematical modelling\label{sec:2}}

%
%
\subsection{R\'{e}nyi Point Process for $[a,b]$}

Let $K(a,b)$ be the (random) number of unit-length cars parked in an interval $[a, b]$ when packing is completed. R\'enyi's stochastic process defines a 
random set 
\[
\alpha(a,b)=\{Y_1,\ldots,Y_{K(a,b)}\} \subset [a,b-1],
\] 
which is a translational invariant point process, as shown below where the random variables $Y_i$ are formally introduced.

\begin{lemma}[Translational Invariance]\label{lemma:translation}
    \[\alpha(a+t,b+t) = t + \alpha(a,b), \; \text{for all $t\in\R$}.\]
\end{lemma}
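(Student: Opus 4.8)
The plan is to establish translational invariance directly from the definition of Rényi's sequential packing process as a coupling argument. The key observation is that the entire stochastic process on $[a+t,b+t]$ can be realised using the same source of randomness as the process on $[a,b]$, provided we shift every sampled position by $t$. To make this precise, I would first recall that the process is driven by a sequence of independent uniform random variables used to select, at each step, a car position uniformly over the currently available (accessible) subset of the parking interval. The accessible subset is a union of gaps, each being a sub-interval in which a new car's left endpoint may legally be placed without causing overlap.

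\medskip

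\noindent First I would set up the coupling: let the process on $[a,b]$ be generated by realising its successive placements $Y_1,Y_2,\ldots$, and define candidate placements on $[a+t,b+t]$ by $Y_i+t$. The core claim to verify by induction on the number of cars placed is that, at every step $k$, the accessible subset for the shifted interval $[a+t,b+t]$ equals the accessible subset for $[a,b]$ translated by $t$. The base case is immediate since the initial accessible sets are $[a,b-1]$ and $[a+t,b+t-1]=t+[a,b-1]$. For the inductive step, I would note that placing a car with left endpoint at $y$ removes from availability precisely the open interval of positions within distance $1$ of the occupied cell; this ``forbidden'' interval commutes with translation, so if the accessible set at step $k$ is the $t$-translate of the original, then placing $Y_k+t$ (the $t$-translate of $Y_k$) yields an accessible set at step $k+1$ that is again the exact $t$-translate.

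\medskip

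\noindent Because the accessible sets coincide up to the shift $t$ at every step, the uniform selection on the shifted accessible set is exactly the pushforward under $x\mapsto x+t$ of the uniform selection on the original accessible set. Hence the two processes can be coupled so that the shifted interval's $i$th car is deterministically $Y_i+t$, and in particular the termination condition (all remaining gaps smaller than one) is met simultaneously, giving $K(a+t,b+t)=K(a,b)$ almost surely under the coupling. It follows that
\[
\alpha(a+t,b+t)=\{Y_1+t,\ldots,Y_{K(a,b)}+t\}=t+\alpha(a,b),
\]
as an equality of random sets in distribution.

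\medskip

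\noindent The main obstacle I anticipate is purely bookkeeping rather than conceptual: one must argue carefully that the measurable structure of the randomness driving the process is genuinely translation-equivariant, so that the coupling is legitimate and the resulting equality holds at the level of distributions (not merely for a fixed realisation). This requires being explicit that the only randomness entering at each step is a uniform draw over the current accessible set, and that this construction is invariant under the affine reparametrisation $x\mapsto x+t$. Since translation is a measure-preserving bijection of $\R$ carrying Lebesgue measure to itself, the uniform distributions match exactly, and no Jacobian or normalisation subtlety arises. Once the equivariance of the accessible sets is secured by the induction, the distributional identity is immediate.
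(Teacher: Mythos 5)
Your argument is correct and is essentially the same as the paper's, which gives only a one-sentence ``proof idea'' stating that the process on $[a+t,b+t]$ generates the shifted variables $Y_1+t,\ldots,Y_K+t$. Your coupling-plus-induction on the accessible sets is simply a careful elaboration of that same observation.
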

\begin{proof}[Proof idea]
    The stochastic process for the interval $[a+t,b+t]$ generates a sequence of 
    random variables $Y_1+t,\ldots,Y_K+t$ where the $Y_i$ are the random variables generated by the process for the interval $[a,b]$.
\end{proof}
It follows that $K(a,a+x)=K(0,x)$ for all real $x\geq 0$. 

A consequence of the one-dimensionality of the R\'enyi point process is  
a splitting property of the random set conditional
on the first point $Y_1$. This conditional set is denoted by
$\alpha(a,b\mid Y_1=y)$. The splitting property gives rise to
the integral equations derived in the next section.
\begin{lemma}[Splitting Property] \label{lemma:splitting}
  \[\alpha(a,b\mid Y_1=y) = \{y\} \cup \alpha(a,y) \cup \alpha(y+1,b).\]
\end{lemma}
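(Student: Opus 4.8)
The plan is to prove the Splitting Property by unfolding the definition of R\'enyi's sequential packing process and tracking what happens to each subsequently placed car once the first car $Y_1 = y$ has been fixed. The key structural observation is that the car occupying $[y, y+1)$ partitions the remaining free space of $[a,b]$ into two disjoint sub-intervals, $[a, y)$ on the left and $[y+1, b)$ on the right, and that these two regions never interact for the rest of the process: a car placed wholly to the left of $y$ can never overlap or block a car placed wholly to the right of $y+1$, and vice versa.

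First I would set up the induction on the number of cars placed after the first. After conditioning on $Y_1 = y$, every later car $Y_i$ ($i \geq 2$) is drawn uniformly from the currently accessible set, which is a subset of $[a, y-1] \cup [y+1, b-1]$ (a car's left endpoint must leave room for its unit length and must not intrude on $[y,y+1)$). I would argue that the accessible set always factorises as a disjoint union of a left part contained in $[a, y-1]$ and a right part contained in $[y+1, b-1]$, and that at each step the placement on one side leaves the accessible set on the other side completely unchanged. Consequently the sub-process restricted to $[a, y]$ is statistically identical to an independent R\'enyi process on $[a, y]$, and likewise the sub-process on $[y+1, b]$ matches an independent process on $[y+1, b]$; this is exactly the content of the claimed set identity. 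The translation invariance of Lemma~\ref{lemma:translation} may be invoked if one wishes to normalise the right sub-interval, though it is not strictly needed for the set-level statement.

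The main subtlety, and the step I expect to require the most care, is handling the \emph{order} in which later cars arrive and the conditioning on the uniform-over-available-space rule. In the full process the cars do not arrive "all left, then all right'' — a left car and a right car may interleave in time, and each one is sampled uniformly over the \emph{combined} remaining free length, not over one side alone. The delicate point is to verify that this global uniform sampling, once restricted to the cars that happen to fall on a given side, reproduces precisely the conditional law of an independent R\'enyi process on that side. I would make this rigorous by noting that, conditional on the multiset of which side each arrival lands on, the position within a side is uniform over that side's available space, and that the independence of the two sides' accessible sets means the induced marginal process on each side is exactly R\'enyi's process there. Since the statement is an identity of random sets (it concerns the final unordered configuration $\alpha$, not the arrival order), the interleaving of arrival times washes out and only the final accessible-set factorisation matters.

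Finally I would confirm the boundary bookkeeping: the singleton $\{y\}$ is the first car itself, the gap $[y, y+1)$ consumed by that car is correctly excluded from both sub-problems (the left process lives on $[a, y]$ and the right on $[y+1, b]$, sharing no interior), and the stopping condition for the global process — no remaining gap of length at least one — holds if and only if it holds separately on each side. This last equivalence is immediate because every remaining gap lies entirely within one sub-interval, so the global process terminates exactly when both sub-processes have terminated, closing the argument.
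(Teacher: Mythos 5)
Your proposal is correct and follows essentially the same route as the paper: the first car at $y$ splits the remaining space into the non-interacting sub-intervals $[a,y]$ and $[y+1,b]$, on each of which the subsequent placements reproduce an independent R\'enyi process, yielding the stated union. You supply considerably more detail than the paper's terse proof --- in particular the careful treatment of interleaved arrivals and the factorisation of the uniform sampling over the combined accessible set --- but the underlying idea is identical.
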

\begin{proof}
  When the random variables $Y_2,Y_3,\ldots$ are generated, each element
  is either in $[a,y-1]$ or in $[y+1,b-1]$. Now $Y_i, Y_j$ are independent
  if $Y_i\in[y+1,b-1]$ and $Y_j\in[a,y-1]$ as they cannot overlap. Consequently, the points in the two subsets generated for the conditional process produce two independent random 
  sets $\alpha(a,y)$ and $\alpha(y+1,b)$.
\end{proof}

The R\'enyi point process gives rise to a \emph{random counting measure} with parameters 
$a,b$ defined by
\[N(a,b) = \sum_{y\in\alpha(a,b)} \delta_y\]
where $\delta_y$ is the atomic measure for which
\[
\delta_y(M)=\begin{cases}
			1, & \text{if $y\in M$}\\
            0, & \text{otherwise}
		 \end{cases}
\]
for any set $M\subset \R$.
 It follows that the
support of this measure is $\alpha(a,b)$. This measure is defined for any set $M \subset \R$
by
\begin{equation}
    N(a,b)(M) = \lvert \alpha(a,b)\cap M\rvert.
\end{equation}
In particular, one has $K(a,b)=N(a,b)([a,b])$. Note that $K$ is translational invariant, 
that is $K(a+t,b+t)=K(a,b)$. 

The conditional counting measure $N(a,b \mid Y_1=y)$ satisfies a splitting property
\begin{equation}\label{eqn:Nsplitting}
  N(a,b\mid Y_1=y) = \delta_y + N(a,y) + N(y+1,b).
\end{equation}
This is a consequence of Lemma~\ref{lemma:splitting}.

%
%
\subsection{Randomly packing disks touching a central disk}

We map the interval $[0,6)$ bijectively onto the unit circle $S^2$ by
\begin{equation}	\label{eqn:v} 
v(y)= Q(y) \begin{bmatrix}1\\0\end{bmatrix}, \quad y\in[0,6).
\end{equation}
where
\begin{equation}
Q(y) = \begin{bmatrix}\cos(\pi y/3)& -\sin(\pi y/3)\\ \sin(\pi y/3) 
	& \cos(\pi y/3)\end{bmatrix},\label{eqn:Q}
\end{equation}
as shown in Figure~\ref{disksAndCars}.  
Packing subintervals of $[1,6)$ then corresponds to packing unit diameter disks 
touching a central unit diameter disk, conditional on an initial disk being attached with its centre at $v(0) = (1, 0)^{\rm T}$. The centres of the touching disks are on the
unit-radius circle $S^2$. One can thus apply R\'enyi's point processes to the addition of disks subsequent to the first added disk. 

We now introduce real and vector valued features of the point process which are polynomial 
functions of the random variables
$v(Y_1),\ldots,v(Y_K)$ invariant under permutations of the $Y_i$. In particular we consider
features invariant under any translation of the parameters $a$ and $b$. Clearly, $K(a,b)$ is an example
as $K(a+t,b+t)=K(a,b)$ and $K(a,b)$ is a constant function of the $Y_i$. 
We introduce a 2-component vector-valued polynomial
\begin{equation} F(a,b) = \sum_{i=1}^{K(a,b)} v(Y_i) \label{Fab}.\end{equation}
The value $((1, 0)^{\rm T} + F(1, 6))/(1 + K(1, 6))$ is the centre of mass of the circular polygon with vertices $v(Y_i)$. 
The splitting property of $F(a,b)$ follows from the definition and is
\begin{equation} 
F(a,b \mid Y_1=y) = F(a,y) + F(y+1,b) + v(y).\label{eqn:Fsplitting}
\end{equation}
But $F$ is not invariant under translations since 
\begin{equation}F(a+t,b+t) = Q(t) F(a,b).\label{eqn:Ftranslate}\end{equation}
We introduce the feature
\begin{equation}	\label{LsqDef}
L^2(a,b) = \lVert F(a,b)\rVert^2
\end{equation}
which is used in Section~\ref{sec:Results} to determine the expected shift in the centre of the polygon with vertices $v(Y_i)$.  
We introduce another second degree feature 
\begin{equation} E_2(a,b) = \sum_{Y_i<Y_j} v(Y_i)^T v(Y_j) = \sum_{Y_i<Y_j} \cos(\pi (Y_i-Y_j)/3)
\label{eqn:E2}\end{equation}
which is better suited to the computations performed in Section~\ref{sec:3}. We show below how this feature relates to $L^2(a,b)$. This feature is also invariant under translations of the $Y_i$ as it only depends on the differences $Y_i-Y_j$ and the translated difference is
$(Y_i+t)-(Y_j+t) = Y_i-Y_j$.
This feature admits the following splitting property:
\begin{proposition}
    \begin{equation}\label{eqn:E2splitting}
      \begin{split}
       E_2(a,b \mid Y_1=y) = E_2(a,y) + E_2(y+1,b) + F(a,y)^T F(y+1,b) \\ 
         + v(y)^T (F(a,y)+F(y+1,b))
      \end{split}
    \end{equation}
\end{proposition}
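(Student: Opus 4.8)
The plan is to prove the splitting formula for $E_2$ directly from its definition by partitioning the sum over pairs according to the splitting of the point set given in Lemma~\ref{lemma:splitting}. Conditional on $Y_1 = y$, the full set of parked points is $\{y\} \cup \alpha(a,y) \cup \alpha(y+1,b)$, a disjoint union. Since $E_2(a,b)=\sum_{Y_i<Y_j} v(Y_i)^T v(Y_j)$ is a sum over unordered pairs of parked points, I would classify each pair $\{Y_i,Y_j\}$ according to which of the three blocks its two members lie in. Because $y$ is the first (leftmost) point, all points of $\alpha(a,y)$ and $\alpha(y+1,b)$ exceed $y$, and every point of $\alpha(a,y)$ is less than every point of $\alpha(y+1,b)$; this ordering keeps the book-keeping of the ``$Y_i<Y_j$'' condition clean.

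The concrete steps are as follows. First I would split the pair-sum into the contributions from pairs \emph{within} a single block and pairs \emph{across} two blocks. The within-block contributions from $\alpha(a,y)$ and from $\alpha(y+1,b)$ are exactly $E_2(a,y)$ and $E_2(y+1,b)$ respectively, by the definition of $E_2$ applied to each sub-process; here I rely on the splitting property of Lemma~\ref{lemma:splitting} to identify these sub-sums with the $E_2$ of the sub-intervals. Second, for the cross terms I would treat the three types of mixed pairs separately: pairs with one member equal to $y$ and the other in $\alpha(a,y)$, pairs with $y$ and a member of $\alpha(y+1,b)$, and pairs with one member in each of $\alpha(a,y)$ and $\alpha(y+1,b)$. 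Summing $v(y)^T v(Y_i)$ over $Y_i\in\alpha(a,y)$ gives $v(y)^T F(a,y)$ by the definition of $F$ in~\eqref{Fab}, and likewise the pairs involving $y$ and $\alpha(y+1,b)$ give $v(y)^T F(y+1,b)$; together these produce the term $v(y)^T\bigl(F(a,y)+F(y+1,b)\bigr)$. Third, the pairs straddling the two nontrivial blocks contribute $\sum_{Y_i\in\alpha(a,y),\,Y_j\in\alpha(y+1,b)} v(Y_i)^T v(Y_j)$, which factors as $F(a,y)^T F(y+1,b)$ since the sum over the outer product of the two sums equals the inner product of the two sums. Collecting the five pieces yields exactly~\eqref{eqn:E2splitting}.

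The main conceptual point to get right, rather than a computational obstacle, is the ordering argument that justifies the cross-term accounting without double-counting or sign errors. Specifically I must verify that every unordered pair of distinct parked points falls into exactly one of the five categories and that the condition $Y_i < Y_j$ is respected in each: the key facts are that $y$ precedes all other points and that $\alpha(a,y)\subset[a,y-1]$ lies entirely to the left of $\alpha(y+1,b)\subset[y+1,b-1]$, so each cross-block pair is counted once with a well-defined orientation. A secondary subtlety is that the independence of the two sub-processes (invoked in Lemma~\ref{lemma:splitting}) is not actually needed for the \emph{algebraic} identity~\eqref{eqn:E2splitting}, which is a pointwise statement about the realized point configuration; independence only becomes relevant when one later takes expectations. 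I would therefore emphasize that~\eqref{eqn:E2splitting} holds deterministically for each realization conditional on $Y_1=y$, mirroring the deterministic character of the measure splitting in~\eqref{eqn:Nsplitting} and the $F$-splitting in~\eqref{eqn:Fsplitting}.
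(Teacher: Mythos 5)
Your proof is correct and is essentially the paper's own argument: the paper likewise decomposes the pair sum $\sum_{Y_i<Y_j}$ into the same five classes (both points in $\alpha(a,y)$, both in $\alpha(y+1,b)$, one in each block, and the two classes pairing $y$ itself with each block), identifying them with $E_2(a,y)$, $E_2(y+1,b)$, $F(a,y)^T F(y+1,b)$ and $v(y)^T(F(a,y)+F(y+1,b))$. One minor slip worth fixing: $y=Y_1$ is the chronologically first point, not the leftmost, and the points of $\alpha(a,y)\subset[a,y-1]$ lie to the \emph{left} of $y$ rather than exceeding it --- but since the summand $v(Y_i)^T v(Y_j)$ is symmetric in $i$ and $j$, this does not affect the accounting, and your later statement of the geometry is the correct one.
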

\begin{proof}
   The sum in Equation~(\ref{eqn:E2}) decomposes into three sums over $i, j \neq 1$, one sum over $i$ where $j=1$ and one sum over $j$ where $i=1$ as follows
   \[\sum_{Y_i<Y_j} = \sum_{Y_i<Y_j<y} + \sum_{y<Y_i<Y_j} + \sum_{Y_i<y<Y_j} + \sum_{Y_1=y<Y_j} + \sum_{Y_i<y=Y_1}.\]
The summand in each term is $\cos(\pi(Y_i-Y_j)/3)$.
\end{proof}
We use the splitting property to derive integral equations in the next section for the expectation of $F(a,b)$ and of $E_2(a,b)$. 

We now have three features $K, L^2$ and $E_2$ connected by the following Lemma. 
\begin{proposition}	\label{prop4}
 \[L^2(a,b) = K(a,b) + 2 E_2(a,b).\]  
\end{proposition}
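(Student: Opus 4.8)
The plan is to expand the squared norm $L^2(a,b)=\lVert F(a,b)\rVert^2$ directly as a double sum and then separate the diagonal from the off-diagonal contributions. Writing $F(a,b)=\sum_{i=1}^{K(a,b)} v(Y_i)$ as in~(\ref{Fab}) and using the definition~(\ref{LsqDef}), the inner product gives
\[
L^2(a,b) = F(a,b)^T F(a,b) = \sum_{i=1}^{K(a,b)} \sum_{j=1}^{K(a,b)} v(Y_i)^T v(Y_j),
\]
and I would split this into the terms with $i=j$ and those with $i\neq j$, treating each group in turn.

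For the diagonal terms, the key observation is that $Q(y)$ defined in~(\ref{eqn:Q}) is a rotation matrix, hence orthogonal, so that $\lVert v(y)\rVert = \lVert Q(y)(1,0)^T\rVert = 1$ for every $y\in[0,6)$. Thus each diagonal summand $v(Y_i)^T v(Y_i)=\lVert v(Y_i)\rVert^2$ equals $1$, and since there are exactly $K(a,b)$ of them their total contribution is $K(a,b)$.

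For the off-diagonal terms I would use the symmetry $v(Y_i)^T v(Y_j)=v(Y_j)^T v(Y_i)$ of the scalar product. Because parked cars do not overlap, the positions $Y_i$ are pairwise distinct, so each unordered pair $\{i,j\}$ with $i\neq j$ is counted exactly twice in the double sum, once with $Y_i<Y_j$ and once with $Y_j<Y_i$. Grouping the ordered pairs by the ordering of their values then yields
\[
\sum_{i\neq j} v(Y_i)^T v(Y_j) = 2\sum_{Y_i<Y_j} v(Y_i)^T v(Y_j) = 2E_2(a,b)
\]
by the definition~(\ref{eqn:E2}). Combining the two contributions gives $L^2(a,b)=K(a,b)+2E_2(a,b)$, as claimed. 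There is no substantial obstacle here: the argument is an elementary expansion of a dot product, and the only two points demanding any care are the unit-norm identity $\lVert v(y)\rVert=1$, which relies on the orthogonality of $Q(y)$, and the distinctness of the positions $Y_i$, which legitimises doubling the sum over unordered pairs to obtain the factor of $2$ in front of $E_2(a,b)$.
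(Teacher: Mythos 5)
Your proposal is correct and follows essentially the same route as the paper's proof: expand $\lVert F(a,b)\rVert^2$ as a double sum, note the diagonal terms each equal $1$ because $v(y)$ lies on the unit circle, and collect the off-diagonal terms into $2E_2(a,b)$ by symmetry. The only difference is that you spell out the justifications (orthogonality of $Q(y)$ and distinctness of the $Y_i$) that the paper leaves implicit.
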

\begin{proof}
    One has
    \begin{align*}
     L^2(a,b) & = \sum_{i=1}^{K(a, b)} \sum_{j=1}^{K(a, b)} v(Y_i)^T v(Y_j) \\
              & = \sum_{i=1}^{K(a, b)} \lVert v(Y_i)\rVert^2 +
                  2 \sum_{Y_i<Y_j} v(Y_i)^T v(Y_j).
    \end{align*}
    The result follows as $v(y)$ lies on the unit circle.
\end{proof}

%
%
\section{Integral equations\label{sec:3}}

%
%
\subsection{Equation for $\E K(0,x)$}

The random variables $K(a,b)$ denoting the counts of parked vehicles (or disks) in R\'enyi's
model on an interval $[a,b]$ are translational invariant, that is $K(a,b) = K(a+t,b+t)$ for any $t\in\R$. 
An application of these properties provides the following lemma.
\begin{proposition}
  Let $u_1(x)=\E K(0,x)$ be the expected number of vehicles in the interval $[0,x]$. Then
      \begin{equation}	\label{u1Eqn}
      u_1(x) = \frac{2}{x-1}\int_0^{x-1} u_1(y)\, dy + 1.
      \end{equation}
\end{proposition}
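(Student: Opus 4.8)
The plan is to compute $u_1(x) = \E K(0,x)$ by conditioning on the location of the first parked car $Y_1$ and applying the splitting property of the counting measure. First I would note that for $x < 1$ no car fits, and for $x \geq 1$ exactly one car is placed first, uniformly distributed over the admissible interval $[0, x-1]$ (in the coordinates where the parking interval is $[0,x]$ and cars occupy $[Y_1, Y_1+1)$). Taking expectations in the splitting relation~(\ref{eqn:Nsplitting}) evaluated on the whole interval, together with $K(a,b) = N(a,b)([a,b])$, gives
\begin{equation*}
\E\bigl(K(0,x)\mid Y_1=y\bigr) = 1 + \E K(0,y) + \E K(y+1,x),
\end{equation*}
where the leading $1$ comes from the atom $\delta_y$ counting the first car.

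Next I would invoke translational invariance, Lemma~\ref{lemma:translation}, in the form $K(y+1,x) = K(0, x-y-1)$, so that $\E K(y+1,x) = u_1(x-y-1)$ and $\E K(0,y) = u_1(y)$. The conditional expectation then reads
\begin{equation*}
\E\bigl(K(0,x)\mid Y_1=y\bigr) = 1 + u_1(y) + u_1(x-y-1).
\end{equation*}
To remove the conditioning I would integrate against the density of $Y_1$, which is uniform on $[0,x-1]$ with density $1/(x-1)$, yielding
\begin{equation*}
u_1(x) = 1 + \frac{1}{x-1}\int_0^{x-1}\bigl(u_1(y) + u_1(x-1-y)\bigr)\,dy.
\end{equation*}

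Finally I would observe that the substitution $y \mapsto x-1-y$ maps the integral of $u_1(x-1-y)$ onto the integral of $u_1(y)$ over the same range $[0,x-1]$, so the two integrals coincide and combine into the factor of $2$ in~(\ref{u1Eqn}). This gives
\begin{equation*}
u_1(x) = \frac{2}{x-1}\int_0^{x-1} u_1(y)\,dy + 1,
\end{equation*}
as claimed. The main obstacle I anticipate is justifying the conditional expectation step cleanly: one must argue that the two sub-processes $\alpha(0,y)$ and $\alpha(y+1,x)$ are genuinely independent copies of the R\'enyi process on their respective (shorter) intervals, so that their expected counts are exactly $u_1(y)$ and $u_1(x-1-y)$. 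This independence and the correct identification of the sub-interval lengths is precisely what Lemma~\ref{lemma:splitting} and the translational invariance of Lemma~\ref{lemma:translation} supply, so the work is in citing them correctly rather than in any delicate estimate.
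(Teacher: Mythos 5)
Your proof is correct and follows essentially the same route as the paper: condition on the uniformly distributed first car $Y_1$, apply the splitting property of the counting measure to get $\E\bigl(K(0,x)\mid Y_1=y\bigr)=1+u_1(y)+u_1(x-y-1)$ via translational invariance, integrate against the uniform density on $[0,x-1]$, and fold the two integrals into one by the substitution $y\mapsto x-1-y$. The only point worth noting is that the independence of the two sub-processes, which you flag as the main obstacle, is not actually needed for this step --- linearity of expectation alone carries the argument, exactly as in the paper's proof.
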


\begin{proof}
    Let $Y_1$ be the (random) position of the first vehicle. For any $a\leq y < b$, 
    Equation~(\ref{eqn:Nsplitting}) implies that the   
    counting variable $K(a,b)$ conditional on $Y_1=y$ satisfies a  \emph{splitting property} 
    \[K(a,b \mid Y_1=y) = K(a,y) + K(y+1,b) + 1.\]
    As $Y_1$ is uniformly distributed over $[a,b-1]$ one has 
    \[ \E K(a,b) = \frac{1}{b-1-a}\int_a^{b-1} \E K(a,b \mid Y_1=y)\, dy. \]
    As the counting variable is translational invariant, that is $K(a,b)=K(0,b-a)$ one solves the proposition by substituting the 
    conditional count on the right-hand side using the splitting property and substituting $a=0$ and $b=x$.
\end{proof}

The expected count for a general interval is
\[\E K(a,b) = u_1(b-a).\]

%
%
\subsection{Equation for $\E F(0,x)$}
         
\begin{proposition}
    Let the (vector valued) function  $u_2(x)=\E F(0,x)$ where $F(0,x)$ is defined in Equation~(\ref{Fab}). Then
    \begin{equation}	\label{u2Eqn}
    u_2(x) = \frac{1}{x-1} \int_0^{x-1} \left(u_2(y) + Q(x-y)u_2(y) + v(y)\right)\, dy 
    \end{equation}
    where $Q$ is defined in Equation~(\ref{eqn:Q}).
\end{proposition}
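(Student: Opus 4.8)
The plan is to follow the template established for $\E K(0,x)$: condition on the position of the first car and then combine the splitting property~(\ref{eqn:Fsplitting}) with the translation property~(\ref{eqn:Ftranslate}). Setting $a=0$ and $b=x$, and using that $Y_1$ is uniformly distributed on $[0,x-1]$, I would first write
\[
u_2(x) = \E F(0,x) = \frac{1}{x-1}\int_0^{x-1} \E F(0,x\mid Y_1=y)\,dy .
\]
Applying~(\ref{eqn:Fsplitting}) and linearity of expectation (no independence is needed for a first moment) gives
\[
\E F(0,x\mid Y_1=y) = \E F(0,y) + \E F(y+1,x) + v(y) = u_2(y) + \E F(y+1,x) + v(y),
\]
where the first term is immediately $u_2(y)$ since its interval already starts at the origin, and $v(y)$ is already in the form appearing in the statement.

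The crucial step is to re-express $\E F(y+1,x)$. Writing $[y+1,x] = [\,0+(y+1),\,(x-y-1)+(y+1)\,]$ and invoking the translation property~(\ref{eqn:Ftranslate}) with shift $t=y+1$ yields $F(y+1,x) = Q(y+1)\,F(0,x-y-1)$, and hence $\E F(y+1,x) = Q(y+1)\,u_2(x-y-1)$. Substituting the three terms back produces
\[
u_2(x) = \frac{1}{x-1}\int_0^{x-1}\bigl(u_2(y) + Q(y+1)\,u_2(x-y-1) + v(y)\bigr)\,dy .
\]

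It remains to reconcile this with the stated equation, which I expect to be the main obstacle: the direct application of splitting and translation gives the asymmetric integrand $Q(y+1)\,u_2(x-y-1)$ rather than $Q(x-y)\,u_2(y)$. I would resolve this by the change of variables $z=x-1-y$ in the integral $\int_0^{x-1} Q(y+1)\,u_2(x-y-1)\,dy$; since then $y+1=x-z$, $x-y-1=z$, and the limits are reversed with $dy=-dz$, this integral equals $\int_0^{x-1} Q(x-z)\,u_2(z)\,dz$, which after relabelling $z$ as $y$ is exactly the term $Q(x-y)\,u_2(y)$ in the claim. A final point to verify carefully is that the translation relation~(\ref{eqn:Ftranslate}), which holds as an equality in distribution by translational invariance of the point process, legitimately transfers to the expectations used above.
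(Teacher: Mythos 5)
Your proposal is correct and follows essentially the same route as the paper: condition on the uniformly distributed $Y_1$, apply the splitting property~(\ref{eqn:Fsplitting}) and the translation property~(\ref{eqn:Ftranslate}) to obtain the integrand $u_2(y)+Q(y+1)u_2(x-1-y)+v(y)$, and then change variables in the second term to reach the stated form. The only difference is that you spell out the substitution $z=x-1-y$ explicitly where the paper merely says it reverses the order of integration.
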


\begin{proof}
    The splitting condition~(\ref{eqn:Fsplitting}) is
    \[F(0,x\mid Y_1=y)=F(0,y)+F(y+1,x)+v(y).\]
    Using the translation formula Equation~(\ref{eqn:Ftranslate}) for $F$ and taking the expectation gives
    \[u_2(x\mid Y_1=y)=u_2(y) + Q(y+1)u_2(x-1-y)+v(y).\]
    Integrating over $y$ and reversing the order of the integration for the second
    term gives the result.
\end{proof}

%
%
\subsection{Equation for $\E E_2(0,x)$}

\begin{proposition}
    Let $u_3(x)=\E E_2(0,x)$. Then
    \begin{equation}	\label{u3Eqn}
    u_3(x)=\frac{2}{x-1}\int_0^{x-1} u_3(y)\, dy + g_3(x)
    \end{equation}
where
  \begin{equation*}\begin{split}
    g_3(x) = \frac{1}{x-1}\int_0^{x-1}\left(u_2(y)^TQ(y+1)u_2(x-1-y) + v(y)^T u_2(y)\right. \\ \left. +\,v(-1)^T u_2(y)\right)\, dy
  \end{split}\end{equation*}
\end{proposition}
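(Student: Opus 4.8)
The plan is to follow the template of the two preceding propositions: condition on the position $Y_1=y$ of the first vehicle, take the expectation of the splitting identity Equation~(\ref{eqn:E2splitting}) with $a=0$ and $b=x$, and then average over the uniform law of $Y_1$ on $[0,x-1]$.

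First I would specialise Equation~(\ref{eqn:E2splitting}) to $a=0$, $b=x$ and take the conditional expectation of each of its five terms. Three ingredients are needed. Translation invariance of $E_2$ gives $\E E_2(0,y)=u_3(y)$ and $\E E_2(y+1,x)=u_3(x-1-y)$. The independence of $\alpha(0,y)$ and $\alpha(y+1,x)$ asserted in Lemma~\ref{lemma:splitting} lets the cross term factor, $\E[F(0,y)^T F(y+1,x)]=\E[F(0,y)]^T\,\E[F(y+1,x)]$. Finally the translation formula Equation~(\ref{eqn:Ftranslate}) gives $F(y+1,x)=Q(y+1)F(0,x-1-y)$, so that $\E F(y+1,x)=Q(y+1)u_2(x-1-y)$, while $\E F(0,y)=u_2(y)$ and $v(y)$ is deterministic once $Y_1=y$ is fixed. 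Collecting these yields
\[
\E[E_2(0,x\mid Y_1=y)] = u_3(y)+u_3(x-1-y)+u_2(y)^TQ(y+1)u_2(x-1-y)+v(y)^Tu_2(y)+v(y)^TQ(y+1)u_2(x-1-y).
\]

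Next I would apply $\frac{1}{x-1}\int_0^{x-1}(\cdot)\,dy$. The substitution $y\mapsto x-1-y$ shows $\int_0^{x-1}u_3(x-1-y)\,dy=\int_0^{x-1}u_3(y)\,dy$, so the first two terms combine into the leading $\frac{2}{x-1}\int_0^{x-1}u_3(y)\,dy$. The third and fourth terms already reproduce the first two integrands of $g_3(x)$, so the only remaining task is to reconcile $v(y)^TQ(y+1)u_2(x-1-y)$ with the stated $v(-1)^Tu_2(y)$.

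This last reconciliation is the one genuinely nonroutine step. Writing $v(y)=Q(y)(1,0)^T$ and using that $Q$ is a rotation with $Q(s)^T=Q(-s)$ and $Q(s)Q(t)=Q(s+t)$, one finds $v(y)^TQ(y+1)=(1,0)Q(-y)Q(y+1)=(1,0)Q(1)=v(-1)^T$, a constant independent of $y$. Hence the fifth integrand equals $v(-1)^Tu_2(x-1-y)$, and one final substitution $y\mapsto x-1-y$ converts its integral into $\int_0^{x-1}v(-1)^Tu_2(y)\,dy$, completing the match with $g_3(x)$. The main subtlety to flag carefully is the independence factoring of the cross term, which is exactly where Lemma~\ref{lemma:splitting} is invoked; the rest is bookkeeping together with the single rotation identity above.
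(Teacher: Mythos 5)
Your proposal is correct and follows essentially the same route as the paper's proof: conditioning on $Y_1=y$ in the splitting identity for $E_2$, factoring the cross term by independence, applying the translation formula for $F$, using the rotation identity $v(y)^TQ(y+1)=v(-1)^T$, and finishing with the change of variables $y\mapsto x-1-y$. Your write-up just spells out the intermediate steps (notably the verification of the rotation identity) in more detail than the paper does.
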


\begin{proof}
  The splitting of $E_2$, Equation~(\ref{eqn:E2splitting}) gives
  \begin{equation*}\begin{split}
      E_2(0,x\mid Y_1=y)=E_2(0,y)+E_2(y+1,x)+F(0,y)^TF(y+1,x) \\
          +\, v(y)^T\left(F(0,y)+F(y+1,x)\right).
  \end{split}\end{equation*}
  Using the independence of  $F(0,y)$ and $F(y+1,x)$ one gets
  \begin{equation*}\begin{split}
      u_3(x\mid Y_1=y)= u_3(y)+u_3(x-1-y)+u_2(y)^T Q(y+1)u_2(x-1-y) \\
          +\, v(y)^T\left(u_2(y)+Q(y+1)u_2(x-1-y)\right).
  \end{split}\end{equation*}  
  Noting that $v(y)^T Q(y+1) = v(-1)^T$ and integrating over $y$ with a change of integration variable in the last term gives the result
\end{proof}

%
%
\section{Numerical solution\label{sec:4}}

Equations~(\ref{u1Eqn}), (\ref{u2Eqn}) and (\ref{u3Eqn}), for the expectations $u_1(x) = \E K(0,x)$, $u_2(x) = \E F(0,x)$ and $u_3(x) = \E E_2(0,x)$ 
are all integral equations of the form
\begin{equation}	\label{eqn:ie}
   u = \mathcal{L}u + g
\end{equation}
for some linear integral operator $\mathcal{L}$ and function $g$ specified in the following
sub-sections.
In the R\'{e}nyi process, a space of length less than 1 must be unoccupied, so $u(x)=0$ for $x\in[0,1)$.  Furthermore, 
\[
\mathcal{L}u(x) =  \frac{1}{x-1} \int_0^{x-1} M(x,y) u(y)\, dy, \quad \text{for $x>1$},
\]
where $M(x,y) = I + Q(x-y)$ 
for the vector case $u_2(x)$,
and $M(x,y)=2$ for the scalar cases $u_1(x)$ and $u_3(x)$.
We are interested in computing $u(x)$ for $x\in[0,5]$.
The solution of Equation~(\ref{eqn:ie}) follows trivially for $x\in[0,2)$: 
\[
u(x) = \begin{cases}
	0, & x\in[0,1); \\
	g(x), & x\in[1,2).
	\end{cases}
\]
Now if one interprets the function $u(x)$ as a block vector where each block corresponds to a function on an interval $[k-1,k]$ then the integral equation~(\ref{eqn:ie}) is of block lower triangular structure and can be solved by repeated substitution. Specifically, one computes 
$u(x)$ for $x\in[k,k+1]$ by
\[u(x) = \frac{1}{x-1} \int_0^{x-1} M(x,y) u(y) + g(x), \quad x\in[k,k+1)\]
which uses $u(x)$ for $x\in[k-1,k]$. It then follows directly that $u$ is $C^\infty$ on each interval $[j-1,j]$ for $j=1,2,\ldots$ and is continuous on $[1,\infty)$.

The approach was implemented in the Julia language~\cite{Bezetal17} using the ApproxFun package~\cite{OlvT14,Olv22}. The code is available on request from the first author.  
Plots of the functions $u_1$, $u_2$ and $u_3$ are shown in Figure~\ref{u_plots}.  Computed values agree to machine accuracy with analytic calculations of 
$u_1(x)$ for $0 \le x \le 4$ \citep{Wei69}, and our own analytic calculations of $u_1(5)$ and $u_2(x)$ and $u_3(x)$ for $0 \le x \le 3$.  

\begin{figure}
        \centering
        \includegraphics[width=0.75\textwidth]{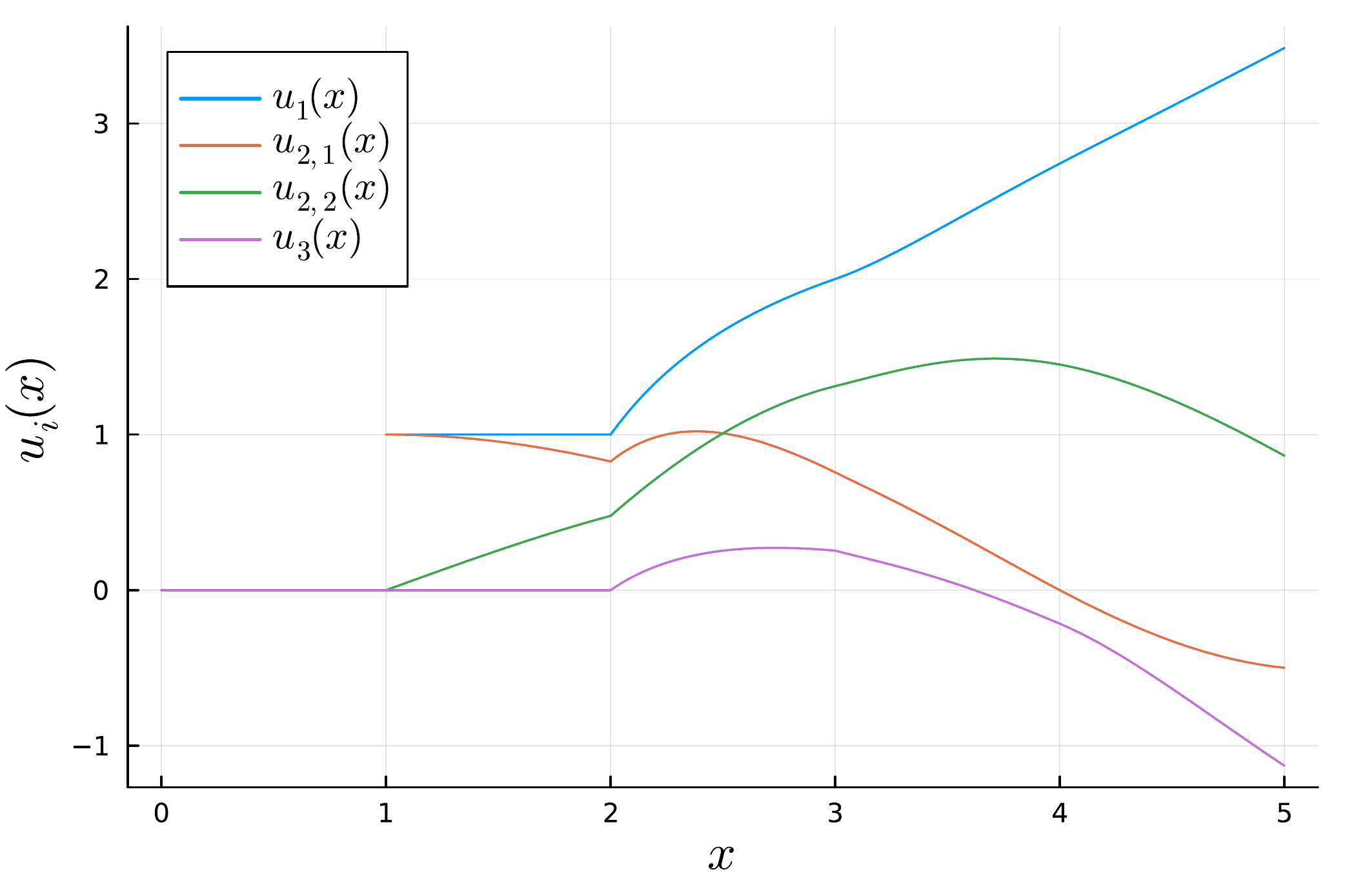}
        \caption{Numerical solutions to  $u_1(x)$, both components of $u_2(x)$, and $u_3(x)$.  }
        \label{u_plots}
\end{figure}

%
%

\section{Results\label{sec:5}}	\label{sec:Results}

It remains to compute the expected values of features pertaining to the full set of attached disks.  The expected total number of disks is 
\[
\E[1 + K(1, 6)] = 1 + u_1(5) = 4.48508592498075.  
\]
 The expected vector sum of centres of the attached discs conditional on the first disk being located at $(1, 0)^{\rm T}$ is 
\begin{eqnarray*}
\lefteqn{ \E\left[\begin{pmatrix}1\\0\end{pmatrix} + F(1, 6) \right]} \\
	& = & \begin{pmatrix}1\\0\end{pmatrix} + Q(1) \E[F(0, 5)] \\
	& = & \begin{pmatrix}1\\0\end{pmatrix} + \begin{pmatrix} \cos(\pi/3) & -\sin(\pi/3) \\ \sin(\pi/3) & \cos(\pi/3) \end{pmatrix} u_2(5)  \\
	& = & \begin{pmatrix} \text{0.00226785060421} \\ 0 \end{pmatrix}.  	
\end{eqnarray*}
By symmetry, the second component must be zero, and this is confirmed to machine accuracy.  
Finally, define the mean square shift of the sum of centres of the attached disks as 
\begin{eqnarray*}
I &  := & \left\lVert \begin{pmatrix}1\\0\end{pmatrix} + F(1, 6) \right\rVert^2 \\
	& = & 1 + 2 (1, 0) Q(1)F(0, 5) + \lVert F(0, 5) \rVert^2 \\
	& = & 1 + 2 \left(\cos(\pi/3), -\sin(\pi/3) \right) F(0, 5) + L^2(0, 5).  
\end{eqnarray*}
Its expected value is, using Proposition~\ref{prop4}, 
\[ 
\E[I] = 1 + 2 \left(\cos(\pi/3), -\sin(\pi/3) \right) u_2(5) + u_1(5) + 2u_3(5) = 0.2325047203936.  
\]

 \paragraph{Acknowledgements}

An initial version of the code was developed and tested using the cloud service for VSCode on juliahub.com.

\ifx\printbibliography\undefined
    \bibliographystyle{plain}
    \bibliography{eg}

@article{Wei69,
	abstract = {Line segments of unit length are placed at random on an interval of length x > 1 until $\nu _{x}$ fit without overlap and no more can be placed without overlap. It is shown that $E[\nu _{x}]\equiv M(x)\sim Cx\ \text{as}\ x\rightarrow \infty $. Bounds for C are obtained by approximating M(x) by straight lines ultimately above and below it, respectively.},
	author = {Howard Weiner},
	date-added = {2023-10-23 12:01:24 +1100},
	date-modified = {2023-10-23 12:02:09 +1100},
	issn = {0581572X},
	journal = {Sankhy{\=a}: The Indian Journal of Statistics, Series A (1961-2002)},
	number = {4},
	pages = {483--486},
	publisher = {Springer},
	title = {Elementary Treatment of the Parking Problem},
	url = {http://www.jstor.org/stable/25049616},
	urldate = {2023-10-22},
	volume = {31},
	year = {1969},
	bdsk-url-1 = {http://www.jstor.org/stable/25049616}}

@book{SelectedTrans,
	author = {Institute of Mathematical Statistics},
	date-added = {2023-09-22 17:25:24 +1000},
	date-modified = {2023-09-22 17:44:48 +1000},
	publisher = {{American Mathematical Society, Providence, RI}},
	title = {{Selected {T}ranslations in {M}athematical {S}tatistics and {P}robability.}},
	volume = {{4}},
	year = {{1963}}}

@article{Ren58,
	author = {R\'{e}nyi, Alfr\'{e}d},
	fjournal = {A Magyar Tudom\'{a}nyos Akad\'{e}mia. Matematikai Kutat\'{o} Int\'{e}zet\'{e}nek K\"{o}zlem\'{e}nyei},
	issn = {0541-9514},
	journal = {Magyar Tud. Akad. Mat. Kutat\'{o} Int. K\"{o}zl.},
	mrclass = {60.00},
	mrnumber = {104284},
	mrreviewer = {P. Erd\H{o}s},
	number = {1-2},
	pages = {109--127},
	title = {On a one-dimensional problem concerning random space filling},
	volume = {3},
	year = {1958}}

@inproceedings{OlvT14,
	author = {Olver, S. and Townsend, A},
	booktitle = {Proceedings of the 1st First Workshop for High Performance Technical Computing in Dynamic Languages},
	pages = {57-62},
	title = {A practical framework for infinite-dimensional linear algebra},
	year = 2014}

@misc{Olv22,
	author = {Olver, S.},
	title = {ApproxFun.jl v0.13.14},
	url = {https://github.com/JuliaApproximation/ApproxFun.jl},
	year = 2022,
	bdsk-url-1 = {https://github.com/JuliaApproximation/ApproxFun.jl}}

@article{Bezetal17,
	author = {Bezanson, Jeff and Edelman, Alan and Karpinski, Stefan and Shah, Viral B.},
	doi = {10.1137/141000671},
	journal = {SIAM Review},
	number = {1},
	pages = {65-98},
	title = {Julia: A Fresh Approach to Numerical Computing},
	volume = {59},
	year = {2017},
	bdsk-url-1 = {https://doi.org/10.1137/141000671}}

@article{LalG74,
	author = {Lal, M. and Gillard, P.},
	doi = {10.2307/2005927},
	fjournal = {Mathematics of Computation},
	issn = {0025-5718},
	journal = {Math. Comp.},
	mrclass = {65D20},
	mrnumber = {341814},
	pages = {561--564},
	title = {Evaluation of a constant associated with a parking problem},
	volume = {28},
	year = {1974},
	bdsk-url-1 = {https://doi.org/10.2307/2005927}}

@article{Maretal89,
	author = {Marsaglia, George and Zaman, Arif and Marsaglia, John C. W.},
	doi = {10.2307/2008355},
	fjournal = {Mathematics of Computation},
	issn = {0025-5718},
	journal = {Math. Comp.},
	mrclass = {65L05 (65Q05)},
	mrnumber = {969490},
	number = {187},
	pages = {191--201},
	title = {Numerical solution of some classical differential-difference equations},
	url = {https://doi-org.virtual.anu.edu.au/10.2307/2008355},
	volume = {53},
	year = {1989},
	bdsk-url-1 = {https://doi-org.virtual.anu.edu.au/10.2307/2008355},
	bdsk-url-2 = {https://doi.org/10.2307/2008355}}

@article{ClaSN16,
	author = {Clay, Matthew P. and Sim\'{a}nyi, N\'{a}ndor J.},
	doi = {10.1142/S0219493716600066},
	fjournal = {Stochastics and Dynamics},
	issn = {0219-4937},
	journal = {Stoch. Dyn.},
	mrclass = {60C05 (60G55 65C35 82C41)},
	mrnumber = {3470555},
	number = {2},
	pages = {1660006, 11},
	title = {R\'{e}nyi's parking problem revisited},
	url = {https://doi-org.virtual.anu.edu.au/10.1142/S0219493716600066},
	volume = {16},
	year = {2016},
	bdsk-url-1 = {https://doi-org.virtual.anu.edu.au/10.1142/S0219493716600066},
	bdsk-url-2 = {https://doi.org/10.1142/S0219493716600066}}
\else\printbibliography\fi

\end{document}